\newtheorem{definition}{Definition}
\newtheorem{remark}{Remark}
\newtheorem{proposition}{Proposition}
\newtheorem{example}{Example}
\newtheorem{lemma}{Lemma}
\newtheorem{theorem}{Theorem}
\journal{Topology and its Applications}
\begin{document}

\begin{frontmatter}


\title{Quasirational relation modules and $p$-adic  Malcev completions.\tnoteref{label1}}
\author{Andrey Mikhovich\corref{cor1}\fnref{label2}}
\ead{mikhandr@mail.ru}

\title{Quasirational relation modules and $p$-adic Malcev completions.}



\address{Moscow State University}
\address{Moscow, Russia}

\begin{abstract}
We introduce the concept of quasirational relation modules for discrete (pro-$p$) presentations of discrete
(pro-$p$)
groups. It is shown, that this class of presentations for discrete groups contains CA-presentations and their subpresentations. For pro-$p$-groups we see
that all presentations of pro-$p$-groups with a single defining relation are quasirational. We offer definitions of $p$-adic
$G(p)$-completion and $p$-adic rationalization of relation modules which are adjusted to quasirational pro-$p$-presentations. $p$-adic rationalizations of quasirational relation modules of pro-$p$-groups are isomorphic to $\mathbb{Q}_p$-points of abelianized $p$-adic Malcev completions.
\end{abstract}

\begin{keyword}
rational homotopy theory\sep classification of homotopy type

MSC codes: 55P15  \sep 55P62

\end{keyword}

\end{frontmatter}


\section{Introduction and motivations.}
Whitehead asphericity question is one of the oldest problems  in Combinatorial Group  Theory. Similarly, validity of analogs of Lyndon Identity Theorem in Combinatorial Theory of pro-$p$-Groups \cite[10.2]{Se2} is still far to be understood.
We introduce a class of finite presentations ("quasirational" presentations) which contains aspherical presentations as well as their subpresentations. In a pro-$p$ case (pro-$p$-groups are projective limits of finite $p$-groups with its pro-$p$-presentations, every pro-$p$-group could be presented as some factor of free pro-$p$-group with an appropriate space of topological generators by a certain closed normal subgroup (see \cite{Se3})) our concept includes one-relator pro-$p$-groups.

Permutational features of relation modules plays the key role in asphericity type problems  \cite{CCH,Mel}. Moreover  permutationality unlike asphericity behaves properly with respect to coinvariant completions of relation modules and holds by their scalar extensions. We will see that quasirational presentations  may be studied by passing to rationalized completions $\overline{R}\widehat{\otimes} \mathbb{Q}_p: =\varprojlim R/[R,R\mathcal{M}_n]\otimes \mathbb{Q}_p$ (since $\varprojlim$ is left exact for quasirational pro-$p$-presentations we have an embedding of abelian groups $\overline{R}\hookrightarrow \overline{R}\widehat{\otimes} \mathbb{Q}_p$)  in a spirit of Gasch\"{u}tz theory (see \cite{Gru}).	$\overline{R} \widehat{\otimes} \mathbb{Q}_p$ has a structure of topological $\mathcal{O}(F_u)^*-$module in a sense of \cite{Ha2}, where $F_u$ be a free prounipotent group with a complete Hopf algebra $\mathcal{O}(F_u)^*:=Hom_k(\mathcal{O}(F_u),k)$  (the "coordinate ring" of $F_u$ i.e. dual to the representing Hopf algebra $\mathcal{O}(F_u)$ of $F_u$ (\cite[3]{Vez})).  The same structure  is valid for quasirational presentations of pro-$p$-groups.

As we will show,  there is an isomorphism  $\overline{R}\widehat{\otimes} \mathbb{Q}_p\cong \overline{R^{\wedge}_u}(\mathbb{Q}_p)$ in the category of topological  $\mathcal{O}(F_u)^*$- modules, where  $\overline{R^{\wedge}_u}(\mathbb{Q}_p)$ is a certain prounipotent module over the same complete Hopf algebra. The latter category seems extremely convenient for the required calculations.
\section{Quasirational relation modules.}
\textbf{For pro-$p$-groups, fix a prime $p>0$ throughout the paper}
(see \cite
{Se3} for details on pro-$p$-groups). \textbf{
For  discrete groups, $p$ will vary}. Let $G$ be a (pro-$p$)group which has a (pro-$p$)presentation of finite type
\begin{eqnarray}
1 \rightarrow R \rightarrow F \rightarrow G \rightarrow 1 \label{1}
\end{eqnarray}

Let $\overline{R}=R/[R,R]$ be the corresponding relation $G$-module, where $[R,R]$ is a (closed) commutator subgroup (in the
pro-$p$-case). Then denote $\mathcal{M}_n$ the corresponding Zassenhaus $p$-filtration of $F,$ which is defined by the rule $\mathcal{M}_n=\{f \in F\mid f-1 \in {\Delta}^n_p, \quad {\Delta}_p =ker\{\mathbb{F}_pF \rightarrow \mathbb{F}_p\}\}$ (see \cite[7.4]{Koch} for details).

\begin{definition} A presentation \ref{1} is \textbf{quasirational} if for every $n>0$ and each prime $p>0$ the $F/R\mathcal{M}_n$-module $R/[R,R\mathcal{M}_n]$ has no $p$-torsion ($p$ is fixed for pro-$p$-groups and run all primes $p>0$ and corresponding $p$-Zassenhaus filtrations in discrete case). The relation modules of such presentations will be called \textbf{quasirational relation modules}.
\end{definition}

\begin{example} Let $\overline{R}$ is a (pro-p-)permutational $G$-module, so by definition $\overline{R}$ has a permutational $G$-basis as a  projective
$(\mathbb{Z}_p)$$\mathbb{Z}$-module. Then, obviously, $\overline{R}$ is quasirational.
\end{example}

\begin{remark} The definition of quasirationality is rigid. Since $R/[R,F]$ has no torsion, then the Schur multiplyer has no torsion.
As a consequence we see that $\mathbb{Z}\oplus \mathbb{Z}/pZ$ has no quasirational presentations. If $G$ is a finite $p$-group, then $H_2(G)$ has a
finite exponent and is not trivial while $G$ is not a cyclic (since its $mod(p)$ factor is not trivial). Hence the only finite $p$-groups, which
have quasirational presentations are cyclic. This shows that quasiratinal finite $p$-groups match with finite aspherical
pro-$p$-groups \cite[Theorem 2.7]{Mel}.
\end{remark}

We need the following elementary
\begin{lemma} Let $G$ be a finite $p$-group acting on a finite abelian group $M$ of exponent $p$. Then the factor module of
coinvariants $M_G=M/(g-1)M\neq 0$, where $(g-1)M$ is a submodule of $M$, generated by elements of the form $(g-1)m$, where $g \in G, m \in
M$.
\end{lemma}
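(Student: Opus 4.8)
The statement is the classical fact that a nontrivial $p$-group acting on a nonzero $\mathbb{F}_p$-vector space has nonzero coinvariants; I would reduce it to the dual (and more familiar) statement about invariants and to the fact that a $p$-group acting on a nonempty finite set of $p$-power-free... more precisely, a $p$-group acting on a finite set whose cardinality is not divisible by $p$ has a fixed point. The plan is as follows.

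\emph{Step 1: Reduce to an irreducible quotient.} Since $M$ is a finite abelian group of exponent $p$, it is a finite-dimensional $\mathbb{F}_pG$-module. If $M\neq 0$, pick a simple quotient $M\twoheadrightarrow S$; since coinvariants are right exact, $M_G\twoheadrightarrow S_G$, so it suffices to show $S_G\neq 0$ for $S$ simple. Hence we may assume $M$ is a simple $\mathbb{F}_pG$-module.

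\emph{Step 2: Use nilpotency of the augmentation ideal.} Let $G$ be a finite $p$-group and let $\Delta\subset \mathbb{F}_pG$ be the augmentation ideal, which is generated by the elements $g-1$, $g\in G$; so $(g-1)M$ in the statement is exactly $\Delta M$. It is a standard fact (e.g. \cite[7.4]{Koch} or Nakayama for the local ring $\mathbb{F}_pG$) that $\Delta$ is nilpotent: $\Delta^N=0$ for some $N$. Therefore $\Delta M=M$ would force $M=\Delta M=\Delta^2 M=\cdots=\Delta^N M=0$, contradicting $M\neq 0$. Hence $\Delta M\neq M$, i.e. $M_G=M/\Delta M\neq 0$.

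\emph{Alternative, more self-contained argument.} If one prefers not to invoke nilpotency of $\Delta$ as a black box, one can argue by induction on $|G|$: choose a central subgroup $Z\cong \mathbb{Z}/p$ of $G$ (it exists since $G$ is a nontrivial $p$-group), and observe that $M^Z$, the $Z$-invariants, is nonzero because $M$ decomposes under the cyclic group $Z$ and the only simple $\mathbb{F}_pZ$-module is the trivial one (equivalently, $(z-1)$ is nilpotent on $M$, so $\ker(z-1)\neq 0$). Then $M^Z$ is a module over $G/Z$, and by induction $(M^Z)_{G/Z}\neq 0$; a small diagram chase shows this maps onto a quotient of $M_G$, giving $M_G\neq 0$.

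\emph{Main obstacle.} There is essentially no obstacle — the content is entirely the nilpotency of the augmentation ideal of a finite $p$-group over $\mathbb{F}_p$ (equivalently, that $\mathbb{F}_pG$ is a local ring). The only point requiring a little care is Step 1, making sure the reduction to the simple case is compatible with the functor $M\mapsto M_G$; this is immediate from right-exactness of $-\otimes_{\mathbb{F}_pG}\mathbb{F}_p$. I would present the nilpotency argument (Steps 1–2) as the main proof since it is shortest, and perhaps remark on the inductive argument.
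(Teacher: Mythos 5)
Your main argument (Step~2) is correct and complete: for a finite $p$-group $G$ the augmentation ideal $\Delta\subset\mathbb{F}_pG$ is nilpotent, $(g-1)M=\Delta M$, and so $\Delta M=M$ would force $M=\Delta^N M=0$; hence $M_G=M/\Delta M\neq 0$. (Your Step~1 is actually superfluous, since this Nakayama-type argument applies directly to any nonzero $M$.) The paper takes a genuinely different, more hands-on route: it inducts on the rank of $M$, using the standard fixed-point theorem for $p$-groups (cited from Serre, \emph{Local Fields}) to extract a trivial submodule $M_0\cong\mathbb{Z}/p\mathbb{Z}$, and then notes that the quotient map $M\to M_1=M/M_0$ carries $(g-1)M$ onto $(g-1)M_1$ and hence induces a surjection $M_G\twoheadrightarrow (M_1)_G$, which is nonzero by induction. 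The two proofs rest on the same underlying fact (that $\mathbb{F}_pG$ is local with only the trivial simple module), but yours packages the induction into a single appeal to nilpotency of $\Delta$ and is shorter, while the paper's stays at the level of group actions and is self-contained modulo the fixed-point theorem. One caveat on your \emph{alternative} argument: the step passing from $(M^Z)_{G/Z}\neq 0$ to $M_G\neq 0$ is not justified as written --- the natural map $(M^Z)_{G/Z}\to M_G$ induced by the inclusion $M^Z\hookrightarrow M$ need not be surjective, and ``maps onto a quotient of $M_G$'' proves nothing unless that quotient is shown to be nonzero. The clean inductive version uses coinvariants throughout: $M_G\cong (M_Z)_{G/Z}$, and $M_Z\neq 0$ because $(z-1)^p=z^p-1=0$ on $M$. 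Since this is offered only as an aside, it does not affect the correctness of your main proof.
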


\begin{proof} We prove by induction on a rank $n$ of $M$. If $n=1$ then $M=\mathbb{Z}/p\mathbb{Z}$ is a trivial $G$-module, since $(\mid Aut(\mathbb{Z}/p\mathbb{Z})\mid,p)=(\mid\mathbb{Z}/(p-1)\mathbb{Z}\mid,p)=1$, so
$M_G=M\neq0$. Let $n=k$, then a submodule of $G$-fixed elements of $M$ is not trivial \cite[Ch IX, 1]{Se1} and contains $M_0=\mathbb{Z}/p\mathbb{Z}$  which has trivial $G$-action.
Let $M_1=M/M_0$ and $\psi:M\rightarrow M_1$ is the corresponding homomorphism of factorization. Since
$\psi((g-1)M)=(g-1)M_1$,  $\psi$ induces the surjection $M_G \twoheadrightarrow (M_1)_G$.
But $(M_1)_G\neq 0$ by induction hence $M_G\neq0$.
\end{proof}

 There is an old problem due to \cite[10.2]{Se2} concerning the description of relation modules of pro-$p$-groups with a single defining relation (i.e. $dim_{\mathbb{F}_p}H^2(G,\mathbb{F}_p)=1$ \cite[4.3]{Se3}). We have:

\begin{proposition} Suppose \eqref{1} is a presentation of a pro-$p$-group $G$ with a single defining relation, then \eqref{1} is quasirational.
\end{proposition}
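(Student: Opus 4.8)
The plan is to reformulate the statement homologically over the completed group algebra $\Lambda=\mathbb{Z}_p[[G]]$. Write $G_n$ for the image of $\mathcal{M}_n$ in $G$; this is the $n$-th Zassenhaus subgroup $\mathcal{M}_n(G)$, and it is open because $G/G_n$ is a finite $p$-group. One checks that $[R,R\mathcal{M}_n]/[R,R]=I_n\overline{R}$, where $I_n=\ker(\Lambda\to\mathbb{Z}_p[G/G_n])$, so that $R/[R,R\mathcal{M}_n]\cong\overline{R}/I_n\overline{R}\cong\overline{R}\otimes_{\Lambda}\mathbb{Z}_p[G/G_n]$ is the module of $G_n$-coinvariants of the relation module. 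Since $R$ is a closed subgroup of the free pro-$p$-group $F$ it is itself free pro-$p$, so $\overline{R}=R^{\mathrm{ab}}$ is $\mathbb{Z}_p$-torsion-free; thus the Proposition asserts that this torsion-freeness passes to each finite-level quotient $\overline{R}/I_n\overline{R}$.

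Next I would feed the relation module into the standard exact sequence $0\to\overline{R}\to\Lambda^{d}\xrightarrow{(x_i-1)}I_G\to0$ (with $d$ the number of topological generators of $F$) and apply $-\otimes_{\Lambda}\mathbb{Z}_p[G/G_n]$. The associated long exact Tor-sequence yields exactness of $0\to\operatorname{Tor}_1^{\Lambda}(I_G,\mathbb{Z}_p[G/G_n])\to\overline{R}/I_n\overline{R}\to\mathbb{Z}_p[G/G_n]^{d}$; since $\Lambda$ is free over $\mathbb{Z}_p[[G_n]]$ ($G_n$ being open) and $\mathbb{Z}_p[G/G_n]\cong\Lambda\otimes_{\mathbb{Z}_p[[G_n]]}\mathbb{Z}_p$ as a $G$-module, Shapiro's lemma identifies that Tor-group with $H_2(G_n,\mathbb{Z}_p)$, while the image of the right-hand map lies inside the $\mathbb{Z}_p$-free module $\mathbb{Z}_p[G/G_n]^{d}$. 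Hence the torsion submodule of $R/[R,R\mathcal{M}_n]$ coincides with that of $H_2(G_n,\mathbb{Z}_p)$, and the Proposition becomes equivalent to the assertion that $H_2(G_n,\mathbb{Z}_p)$ is $\mathbb{Z}_p$-torsion-free for every $n\ge1$.

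Now I would argue case by case. If $r\notin\mathcal{M}_2$, a Tietze transformation presents $G$ as a free pro-$p$-group, so $G_n$ is free pro-$p$ and $H_2(G_n,\mathbb{Z}_p)=0$; equivalently $\overline{R}\cong\Lambda$ and $\overline{R}/I_n\overline{R}\cong\mathbb{Z}_p[G/G_n]$ is $\mathbb{Z}_p$-free. If $r=s^{p^{k}}$ is a proper power (with $s$ not a proper power in $F$), then $\overline{R}$ is the permutation module $\mathbb{Z}_p[[G/C]]$ over the procyclic subgroup $C=\overline{\langle\bar s\rangle}\cong\mathbb{Z}/p^{k}$ — the proper-power case of the pro-$p$ Lyndon theorem, which is available — and this is a pro-$p$-permutational $G$-module, hence quasirational already by Example~1. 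The remaining and essential case is $r\in\mathcal{M}_2$ with $r$ not a proper power; then $G$, hence each $G_n$, is torsion-free, and one must show directly that $H_2(G_n,\mathbb{Z}_p)$ has no $p$-torsion. Equivalently, with $\mathfrak{a}=\operatorname{Ann}_{\Lambda}(\bar r)$ the annihilator of the cyclic generator of $\overline{R}$ (so $\overline{R}\cong\Lambda/\mathfrak{a}$, and $\mathfrak{a}$ is already $p$-saturated because $\overline{R}$ is $\mathbb{Z}_p$-torsion-free), one must prove that $\mathfrak{a}+I_n$ is $p$-saturated in $\Lambda$. The single-relator hypothesis enters through the Fox identity $\sum_i\overline{\partial r/\partial x_i}\,(\bar x_i-1)=0$ combined with the fact that $\mathbb{Z}_p[[F]]$ is a domain (it is a filtered ring whose associated graded ring is a free associative $\mathbb{F}_p$-algebra): these force, at depth $n=1$, that no element of $\mathfrak{a}$ has nonzero augmentation, so $R/[R,F]\cong\mathbb{Z}_p$; running the same argument over the finite quotients $F/\mathcal{M}_n$, and using the elementary Lemma to prevent the relevant spaces of coinvariants from collapsing, one deduces the $p$-saturation of $\mathfrak{a}+I_n$ for all $n$.

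The main obstacle is precisely this last case. Since $G_n$ need not be a one-relator pro-$p$-group, its second homology cannot simply be read off a presentation; and since the pro-$p$ analogue of Lyndon's Identity Theorem is open, one may not assume $\overline{R}$ is $\Lambda$-free. The argument must therefore control $H_2(G_n,\mathbb{Z}_p)$ — equivalently the $p$-saturation of $\mathfrak{a}+I_n$ — using only the single-relator hypothesis together with the domain property of the Zassenhaus-graded group algebra of $F$. Everything else — the identifications of the first two paragraphs, and the free and proper-power cases — is routine.
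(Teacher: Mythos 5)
Your reduction in the first two paragraphs is sound: identifying $R/[R,R\mathcal{M}_n]$ with the $G_n$-coinvariants $\overline{R}\otimes_{\Lambda}\mathbb{Z}_p[G/G_n]$ and, via the Crowell sequence and Shapiro's lemma, identifying its torsion with that of $H_2(G_n,\mathbb{Z}_p)$, is correct. But the proof stops where the real content begins. In the case you yourself call ``remaining and essential'' ($r\in Fr(F)$, not a proper power --- which, under the paper's convention that a single defining relation means $\dim_{\mathbb{F}_p}H^2(G,\mathbb{F}_p)=1$ and hence $r\in Fr(F)$, is essentially the whole problem), you only list ingredients (Fox derivatives, the domain property of the Zassenhaus-graded algebra, ``running the same argument over the finite quotients'') and explicitly label the conclusion an obstacle rather than deriving it: no argument is given that $\mathfrak{a}+I_n$ is $p$-saturated for $n\geq 2$, and the Fox-calculus observation only recovers the depth-one statement $R/[R,F]\cong\mathbb{Z}_p$. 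In addition, the proper-power case invokes ``the proper-power case of the pro-$p$ Lyndon theorem, which is available'' with no reference; the paper's own introduction describes the pro-$p$ Lyndon Identity Theorem as far from understood, and the permutation-module structure $\overline{R}\cong\mathbb{Z}_p[[G/C]]$ for proper-power relators is not something you may assume here. So the proposal reduces the Proposition to an equivalent unproved statement rather than proving it.

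The paper's argument needs none of this machinery and never touches $H_2(G_n,\mathbb{Z}_p)$. It pushes all the way down to $G$-coinvariants: $R/[R,F]$ is generated by the image of $r$ and surjects onto $\mathbb{Z}_p$, hence equals $\mathbb{Z}_p$. If $\overline{R}_n=R/[R,R\mathcal{M}_n]$ had torsion, split off $M_{tors}$ as a $G_n$-submodule, reduce mod $p$, and apply the paper's Lemma to get $(M_{tors}/pM_{tors})_{G_n}\neq 0$; but $\dim_{\mathbb{F}_p}(\overline{R}_n/p\overline{R}_n)_{G_n}=\dim_{\mathbb{F}_p}R/R^p[R,F]=\dim_{\mathbb{F}_p}H^2(G,\mathbb{F}_p)=1$, and the torsion-free summand already accounts for that one dimension --- contradiction. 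The missing step in your route is precisely an analogue of this counting argument, and it is the only place the single-relator hypothesis actually does any work.
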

\begin{proof} Note that $R/[R,RF]=R/[R,F]=\mathbb{Z}_p,$ where $R=(r)_F$ is the topological normal closure of $r\in Fr(F)$, $Fr(F)$ is the Frattini subgroup of a free pro-$p$-group $F$. Indeed, let $\phi:R \rightarrow \mathbb{Z}_p$ be a projection of $R$ onto a free cyclic subgroup generated by $r$. Since we can take a basis (a profinite subspace) of $R$ as a free pro-$p$-group consisting of conjugates of $r$, i.e. $r^f, f \in F$, there  exists a surjection $R/[R,F]
\twoheadrightarrow \mathbb{Z}_p$. However $R/[R,F]$ is generated by the image of $r$, i.e. $\overline{r} \in R/[R,F]=\langle r\rangle=\mathbb{Z}_p$.  Suppose
$\overline{R}_n=R/[R,R\mathcal{M}_n]$ has torsion, then $R/[R,R\mathcal{M}_n]=M_{tors} \bigoplus
M_{\mathbb{Z}_p}$ where $M_{tors}$ is a torsion subgroup, $M_{\mathbb{Z}_p}$ is a projective $\mathbb{Z}_p$-module. $M_{tors}$ and $M_{\mathbb{Z}_p}$ are $\mathbb{Z}_p[G_n]$-submodules, where
$G_n=F/\mathcal{M}_nR$. Consider $mod(p)$ reduction of  $\overline{R}_n$, then $\overline{R}_n/{p\overline{R}_n}$ as a $\mathbb{F}_p[G/\mathcal{M}_nR]$-module has a decomposition $\overline{R}_n/{p\overline{R}_n}=M_{tors}/pM_{tors}\oplus M_{\mathbb{Z}_p}/pM_{\mathbb{Z}_p}$. $M_{tors}\neq 0$,
hence $M_{tors}/pM_{tors}\neq 0$. Now $M_{tors}/pM_{tors}$ is an abelian group of exponent $p$, $G_n$ is a finite
$p$-group which acts on a finite abelian group of exponent $p$ so by
the lemma $(M_{tors}/pM_{tors})_G\neq 0$.  Since
$(M_{\mathbb{Z}_p})_G=\mathbb{Z}_p$ we have $dim_ {\mathbb{F}_p}((M_{\mathbb{Z}_p}/pM_{\mathbb{Z}_p})_G\oplus (M_{tors}/pM_{tors})_G) =
dim_{\mathbb{F}_p}(R/R^p[R,F])=dim_{\mathbb{F}_p}H^2(G,\mathbb{F}_p)=1$ \cite[4.3]{Se3}. Therefore $dim_{\mathbb{F}_p}((M_{\mathbb{Z}_p}
/pM_{\mathbb{Z}_p})_G)=1$. Hence we have a contradiction $dim_{\mathbb{F}_p}(M_{tors}/pM_{tors})_G=0$ and $R/[R,R\mathcal{M}_n]$ has no torsion.
\end{proof}

Let \eqref{1} be a presentation of a discrete group. Then $K(X;R)$ denotes the standard two-dimensional $CW$-complex of \ref{1} (see \cite{CCH} for details). We call $\langle X;R\rangle$ \textbf{aspherical} if $K(X;R)$ is aspherical, i.e. $\pi_q(K(X;R))=0$, $q\geq 2$. Simple asphericity is in a way a rather restrictive concept and therefore needs to be extended. There is an attractive setting of CA(combinatoricaly aspherical)-presentations \cite[1]{CCH} which can be characterized by the following:
\begin{proposition}
Let $\langle X;R\rangle$ be a presentation of $G$ and suppose every element of $R$ is reduced. Let $\overline{N}$ be the corresponding relation module. Then $\langle X;R\rangle$ is CA and concise if and only if $\overline{N}$ decomposes as a $\mathbb{Z}G$-module into a direct sum of cyclic submodules $P_r,r\in R$, where $P_r$ is generated by $\overline{r}=rN'$ subject to the single relation $\pi(s)\overline{r}=\overline{r},s$ being the root of $r$.
\end{proposition}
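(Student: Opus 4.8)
The plan is to characterize CA-plus-concise presentations by the module-theoretic structure of the relation module, working through the geometry of the associated two-complex and its universal cover. First I would recall that for a presentation $\langle X;R\rangle$ with every $r\in R$ reduced, writing each $r = s^{m_r}$ with $s$ the root (so $s$ is not a proper power) and $\pi(s)$ the image in $G$ of the root after the obvious cyclic-conjugacy normalization, the relation module $\overline N = N/N'$ is generated as a $\mathbb{Z}G$-module by the classes $\overline r = rN'$. The heart of the matter is to show that CA-ness and conciseness together are exactly what is needed to guarantee (i) there are no "extra" $\mathbb{Z}G$-relations among the generators $\overline r$ beyond the evident stabilizer relation $(\pi(s)-1)\overline r = 0$ (equivalently $\pi(s)\overline r = \overline r$), and (ii) the cyclic pieces $P_r$ sum \emph{directly}.

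For the forward direction, I would use the standard fact (from the Chiswell--Collins--Huebschmann framework, \cite{CCH}) that a CA-presentation has a "model" aspherical-like behaviour in degree $2$: the relation module is computed by the cellular chain complex of the universal cover of $K(X;R)$, and combinatorial asphericity forces the $\pi_2$ to vanish after the appropriate identification, so that $\overline N$ is the cokernel of $\partial_3$ into the free module on the $2$-cells modulo the image coming from identifications of $2$-cells sharing a root. Concretely, each family of $2$-cells attached along powers of a common root $s$ contributes a single cyclic summand $\mathbb{Z}G/(\pi(s)-1)$, because the only identifications among lifts of that $2$-cell are the cyclic rotations by the deck transformation $\pi(s)$; conciseness is what rules out the degenerate cases (a relator that is trivial in $G$, or two relators with conjugate roots being counted separately) that would either kill a summand or merge two of them, so that the decomposition is indexed cleanly by $r\in R$. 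I would assemble this into the direct-sum statement by checking that the natural map $\bigoplus_{r} P_r \to \overline N$ is surjective (clear, since the $\overline r$ generate) and injective (this is where asphericity enters: a relation among the $\overline r$ would lift to a spherical diagram, which CA-ness trivializes).

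For the converse, assume $\overline N \cong \bigoplus_{r\in R} P_r$ with $P_r = \mathbb{Z}G\,\overline r$ subject only to $\pi(s)\overline r = \overline r$. Conciseness I would extract directly: if some $r$ were trivial in $G$ the corresponding $P_r$ would be a free cyclic module $\mathbb{Z}G$ sitting as a direct summand, and if two relators had conjugate roots the uniqueness of the decomposition would be violated, so the hypothesis forces the presentation to be concise. For CA-ness I would run the identity-sequences / pictures argument of \cite{CCH} in reverse: the prescribed presentation of $\overline N$ by generators and relations is precisely the first syzygy one expects from a CA-presentation, and the absence of further relations means every spherical picture over $\langle X;R\rangle$ reduces to a combination of the "trivial" bridge moves and the rotations coming from the roots, which is the combinatorial definition of asphericity in the CCH sense.

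The main obstacle I anticipate is the bookkeeping around roots and proper powers — making the map $\pi(s)$ and the normalization "$s$ is the root of $r$" canonical, and verifying that the stabilizer of a $2$-cell in the universal cover is exactly the cyclic group generated by $\pi(s)$ (no more, no less). This is where a relator being a proper power interacts delicately with torsion in $G$, and it is precisely the point at which the hypothesis that each $r$ is reduced, together with conciseness, must be used in full strength. Once that identification is pinned down, both implications follow from transporting statements about $\pi_2$ and spherical pictures through the chain complex of the universal cover in the now-standard way.
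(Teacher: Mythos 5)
You should know up front that the paper does not prove this statement at all: its ``proof'' is the single citation \cite[1.2]{CCH}, of which the proposition is a verbatim restatement. So your outline is not competing with an argument in the paper; it is attempting to reconstruct the argument of the cited source, and in broad strokes it follows the same route (identity sequences / spherical pictures over $K(X;R)$, the exact sequence identifying $\overline N$ as the quotient of the free module $\bigoplus_r \mathbb{Z}G e_r$ by the module of identities, and the observation that CA-ness pins that kernel down to the submodule generated by the root identities $(\pi(s_r)-1)e_r$).

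That said, two points in your sketch are loose enough that they would need repair before this counts as a proof rather than a plan. First, the phrase ``a relation among the $\overline r$ would lift to a spherical diagram, which CA-ness trivializes'' overstates what CA gives you: combinatorial asphericity does not trivialize spherical diagrams, it reduces each one (up to Peiffer equivalence) to a combination of the root identities coming from relators that are proper powers. If CA genuinely trivialized them, $\overline N$ would be free, which is the stronger aspherical case. What you actually need is: the kernel of $\bigoplus_r \mathbb{Z}G e_r \twoheadrightarrow \overline N$ equals $\sum_r \mathbb{Z}G(\pi(s_r)-1)e_r$, which is automatically the direct sum $\bigoplus_r (\pi(s_r)-1)\mathbb{Z}G e_r$ because each generator is supported on a single coordinate; this is the precise content of the CCH equivalence between CA and the generation of the identity module by root identities, and it is the one step your sketch gestures at without carrying out. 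Second, your handling of conciseness is not quite the right dichotomy: the degenerate cases conciseness excludes concern relators that are conjugate (or conjugate to inverses) of one another --- which would force $P_{r_1}$ and $P_{r_2}$ to coincide as submodules and destroy directness --- and relators declared with a false root; ``two relators with conjugate roots'' is not by itself a violation. Neither issue is a wrong turn, but both sit exactly at the point where the hard combinatorial content of \cite{CCH} lives, so as written your proposal, like the paper itself, ultimately defers to that source.
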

\begin{proof}
\cite[1.2]{CCH}
\end{proof}
\begin{proposition} Let \eqref{1} be a discrete CA-presentation (so it has a permutation relation module), then \eqref{1} and all subpresentations of \eqref{1} are quasirational.
\end{proposition}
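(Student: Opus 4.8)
The plan is to reduce everything to the permutation-module case recorded in Example~1. As the statement already observes, the relation module $\overline{N}$ of a CA-presentation is a permutation $\mathbb{Z}G$-module: by the preceding Proposition (\cite[1.2]{CCH}) a concise CA-presentation has $\overline{N}\cong\bigoplus_{r\in R}\mathbb{Z}G/(\pi(s_r)-1)\mathbb{Z}G\cong\bigoplus_{r\in R}\mathbb{Z}[G/\langle\pi(s_r)\rangle]$, with $s_r$ the root of $r$. Hence \eqref{1} is quasirational by Example~1. The genuinely new content is the assertion about subpresentations, so I would first extract a lemma whose proof is insensitive to the group: \emph{every presentation whose relation module is a permutation module is quasirational.}

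To prove this lemma I would make the mechanism behind Example~1 explicit. Each $\mathcal{M}_n$ is characteristic in $F$, so $R\mathcal{M}_n$ is normal in $F$ and its image $\overline{\mathcal{M}}_n:=R\mathcal{M}_n/R$ is normal in $G$. Since $F$ acts on $\overline{R}=R/[R,R]$ through $G$, reduction modulo $[R,R]$ identifies the image of $[R,R\mathcal{M}_n]$ with the submodule of $\overline{R}$ spanned by the $(g-1)\bar r$, $g\in\overline{\mathcal{M}}_n$, $\bar r\in\overline{R}$; therefore
\[
R/[R,R\mathcal{M}_n]\;\cong\;\overline{R}\big/I_{\overline{\mathcal{M}}_n}\overline{R}\;=\;(\overline{R})_{\overline{\mathcal{M}}_n},
\]
the module of $\overline{\mathcal{M}}_n$-coinvariants ($I_{\overline{\mathcal{M}}_n}$ the augmentation ideal of $\mathbb{Z}[\overline{\mathcal{M}}_n]$), which is naturally a module over $\mathbb{Z}[G/\overline{\mathcal{M}}_n]=\mathbb{Z}[F/R\mathcal{M}_n]$. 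If $\overline{R}=\bigoplus_i\mathbb{Z}[G/H_i]$, then, $\overline{\mathcal{M}}_n$ being normal, $(\overline{R})_{\overline{\mathcal{M}}_n}=\bigoplus_i\mathbb{Z}\big[(G/\overline{\mathcal{M}}_n)/\bar{H}_i\big]$ is again a permutation module; in particular it is free as an abelian group, hence has no $p$-torsion for any prime $p$. This proves the lemma (re-proving quasirationality of \eqref{1}), and the argument nowhere uses anything special about $G$.

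For a subpresentation $\langle X;S\rangle$, $S\subseteq R$, set $R_S=\langle\langle S\rangle\rangle_F\subseteq R$ and $G_S=F/R_S$, and note that the Zassenhaus filtration still lives on the unchanged group $F$. The external input I would quote is the theorem of Chiswell--Collins--Huebschmann that a subpresentation of a CA-presentation is again CA (and concise when the ambient presentation is, conciseness being inherited along $S\subseteq R$); see \cite{CCH}. Then the preceding Proposition applies to $\langle X;S\rangle$ and exhibits its relation module $R_S/[R_S,R_S]$ as a permutation $\mathbb{Z}G_S$-module, so the lemma above, applied with $(R_S,G_S)$ in place of $(R,G)$, shows $R_S/[R_S,R_S\mathcal{M}_n]$ is free abelian and in particular $p$-torsion-free for every $p$; thus $\langle X;S\rangle$ is quasirational. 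The one step that carries the real weight is precisely this hereditary property of CA-presentations: the coinvariants bookkeeping above is elementary, whereas the closure of CA-presentations under subpresentations is a genuine result of the picture calculus of \cite{CCH}, best cited rather than reproved here.
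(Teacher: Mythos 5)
Your treatment of the ambient presentation is correct and matches the paper's intent: the identification $R/[R,R\mathcal{M}_n]\cong(\overline{R})_{\overline{\mathcal{M}}_n}$ together with the fact that coinvariants of a permutation module under a normal subgroup form again a permutation module (hence a free abelian group) is exactly the mechanism behind Example~1, and your auxiliary lemma ``permutation relation module $\Rightarrow$ quasirational'' is sound.

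The gap is in the subpresentation case, and it is exactly where you yourself say the weight lies. You rest the whole argument on the assertion that a subpresentation of a (concise) CA-presentation is again CA, cited to \cite{CCH}. No such theorem is available there, and one should not expect it to be: for concise presentations whose relators are not proper powers, CA amounts to asphericity of the standard complex $K(X;R)$, so heredity of CA under subpresentations would essentially settle the Whitehead asphericity question for standard complexes of finite presentations --- the open problem the paper opens with, and the very reason quasirationality is introduced as a weaker property that \emph{can} be shown to pass to subpresentations. The paper's proof therefore never claims anything about the $G_{-1}$-module structure of $\overline{R_{-1}}$. Instead it argues at the level of full coinvariants: $R_{-1}/[R_{-1},F]$ is generated by $|R|-1$ elements and maps to the free abelian group $R/[R,F]=\bigoplus_{r\in R}(P_r)_G$ with image a copy of $\mathbb{Z}^{|R|-1}$ (the retained relators give part of a basis there), hence $R_{-1}/[R_{-1},F]\cong\mathbb{Z}^{|R|-1}$ is torsion free; then the Lemma on coinvariants of finite $p$-groups acting on exponent-$p$ modules shows that torsion in any $R_{-1}/[R_{-1},R_{-1}\mathcal{M}_n]$ would descend to nonzero torsion in $R_{-1}/[R_{-1},F]$, a contradiction (this is the equivalence recorded in Proposition~4). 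To repair your proof you must replace the CA-heredity citation by an argument of this kind, using only the ambient permutation module and passage to coinvariants.
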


\begin{proof} Let $R_{-1}$ be a presentation without any one relator (excluded from $R$). First prove the statement for $R_{-1}$, in general the proof will be the same. First note that
$R_{-1}/[R_{-1},F]=\mathbb{Z}^{r-1}$ since its coinvariants could be mapped into coinvariants of a permutation module (see the previous proof); moreover there is a homomorphism of abelian groups $\phi:R_{-1}/[R_{-1},F] \rightarrow R/[R,F]$ and
since $R/[R,F]$ is a free abelian group, the image is just a copy of $\mathbb{Z}^{r-1}$.
 Indeed $R_{-1}/[R_{-1},F]$ as an abelian group is generated by $(r-1)$ elements (so it must be isomorphic to
 $\mathbb{Z}^{r-1}$). If the abelian group $R_{-1}/[R_{-1},R_{-1}\mathcal{M}_n]$ for some p-Zassenhaus filtration $\mathcal{M}_n$ has torsion then torsion elements $M_{tors}$ must generate a direct $G_n$-module
 summand and we use  mod(p) reduction and the lemma to see that the coinvariants of $M_{tors}/pM_{tors}$ will bring a new non zero torsion summand in $R_{-1}/[R_{-1},F]$. Hence we see that $R_{-1}/[R_{-1},R_{-1}\mathcal{M}_n]$ has no torsion.
\end{proof}

\begin{remark} CA-presentations and pro-$p$-aspherical relation modules \cite[2]{Mel} are the main motivations for our concept of quasirationality. Their remarkable features were understood in \cite{CCH}(for discrete presentations) and \cite{Mel} (for pro-$p$-groups).
\end{remark}

\begin{proposition}Let \eqref{1} is a (pro-$p$-)presentation, then the following conditions are equivalent:

(i) \eqref{1} is quasirational

(ii) abelian (pro-$p$-)group $R/[R,F]$ has no torsion
\end{proposition}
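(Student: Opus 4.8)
The plan is to prove both implications through a common device: the passage from $R/[R,F]$ to the finite-level quotients $R/[R,R\mathcal{M}_n]$ and back, controlled by the elementary Lemma on nonvanishing of coinvariants for finite $p$-groups acting on $\mathbb{F}_p$-modules. The direction (i) $\Rightarrow$ (ii) should be the easy one: if each $R/[R,R\mathcal{M}_n]$ is $p$-torsion-free, then, since $[R,F]\supseteq [R,R\mathcal{M}_n]$ and $F/R\mathcal{M}_n$ surjects onto the relevant finite quotients, $R/[R,F]$ arises as a limit (or quotient) of torsion-free modules; more directly, a torsion element of $R/[R,F]$ would, after pulling back, produce a torsion element in some $R/[R,R\mathcal{M}_n]$ because the Zassenhaus filtration is separating, i.e. $\bigcap_n R\mathcal{M}_n$ induces the trivial filtration on the relevant quotient and $[R,F]=\bigcap_n[R,R\mathcal{M}_n]$ (here one uses that $\mathcal{M}_n$ exhausts $F$ in the sense that $\varprojlim F/\mathcal{M}_n$ recovers $F$ in the pro-$p$ case, and the analogous statement for the varying primes in the discrete case). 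So torsion in $R/[R,F]$ forces torsion at some finite level, contradicting (i).

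For (ii) $\Rightarrow$ (i), I would argue by contradiction exactly as in the proof of the one-relator Proposition and of the CA-subpresentation Proposition above: suppose $R/[R,F]$ is torsion-free but some $\overline{R}_n = R/[R,R\mathcal{M}_n]$ has $p$-torsion. Split $\overline{R}_n = M_{\mathrm{tors}}\oplus M_{\mathrm{free}}$ as $\mathbb{Z}_p[G_n]$-modules ($G_n = F/R\mathcal{M}_n$), with $M_{\mathrm{tors}}\neq 0$. Then the $\bmod\, p$ reduction gives $\overline{R}_n/p\overline{R}_n = M_{\mathrm{tors}}/pM_{\mathrm{tors}}\oplus M_{\mathrm{free}}/pM_{\mathrm{free}}$ with $M_{\mathrm{tors}}/pM_{\mathrm{tors}}\neq 0$ a finite $\mathbb{F}_p[G_n]$-module on which the finite $p$-group $G_n$ acts; by the Lemma, $(M_{\mathrm{tors}}/pM_{\mathrm{tors}})_{G_n}\neq 0$. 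Taking coinvariants commutes with the direct sum decomposition, and $(\overline{R}_n)_{G_n} = R/[R,R\mathcal{M}_n F] = R/[R,F]$ (since $R\mathcal{M}_n F = F$, as $F/R\mathcal{M}_n$ is the relevant quotient and $\mathcal{M}_n F = F$). Hence $(M_{\mathrm{tors}}/pM_{\mathrm{tors}})_{G_n}$ injects into $(R/[R,F])/p(R/[R,F])$ as a direct summand that does not come from the free part — but a nonzero such summand, being the reduction of a torsion group, witnesses $p$-torsion (or at least a failure of the expected rank/dimension count) in $R/[R,F]$, contradicting torsion-freeness. More carefully: the $\mathbb{F}_p$-dimension of $(R/[R,F])\otimes\mathbb{F}_p$ would exceed the $\mathbb{Z}_p$-rank of $R/[R,F]$, which is impossible for a torsion-free finitely generated $\mathbb{Z}_p$-module (or a free abelian group in the discrete case).

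The main obstacle I anticipate is the bookkeeping of the identity $(\overline{R}_n)_{G_n} \cong R/[R,F]$ and the compatibility of the torsion splitting with reduction mod $p$ and with coinvariants — in particular making sure that $M_{\mathrm{free}}/pM_{\mathrm{free}}$ accounts for \emph{exactly} the dimension predicted by $R/[R,F]$, so that the extra contribution from $M_{\mathrm{tors}}$ genuinely has nowhere to go. In the one-relator case this was pinned down by the equality $\dim_{\mathbb{F}_p}(R/R^p[R,F]) = \dim_{\mathbb{F}_p} H^2(G,\mathbb{F}_p) = 1$; in the present generality one replaces this by the corresponding (possibly infinite, but level-finite) dimension count, and the delicate point is justifying that the decomposition $\overline{R}_n = M_{\mathrm{tors}}\oplus M_{\mathrm{free}}$ can be chosen $G_n$-equivariantly (which holds because $M_{\mathrm{tors}}$ is the full $\mathbb{Z}_p$-torsion submodule, hence characteristic, and $M_{\mathrm{free}}$ is a projective $\mathbb{Z}_p$-module so the extension splits). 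Once equivariance of the splitting and the coinvariants identification are in hand, the Lemma does the rest and both implications close.
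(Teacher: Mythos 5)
Your proposal follows essentially the same route as the paper, whose own proof of this proposition is simply ``see the previous proofs'': you reproduce the torsion-splitting argument, the $\bmod\,p$ reduction, the Lemma on coinvariants, and the identification $(\overline{R}_n)_{G_n}\cong R/[R,F]$. Two remarks, one minor and one substantive. The minor one: your treatment of (i)$\Rightarrow$(ii) is both overcomplicated and based on a false identity. Since $\mathcal{M}_1=F$ (every $f-1$ lies in the augmentation ideal), the case $n=1$ of quasirationality literally says that $R/[R,R\mathcal{M}_1]=R/[R,F]$ has no $p$-torsion, for every relevant $p$; so (i)$\Rightarrow$(ii) is immediate. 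Your claimed equality $[R,F]=\bigcap_n[R,R\mathcal{M}_n]$ has the inclusions going the wrong way --- one has $[R,R\mathcal{M}_n]\subseteq[R,F]$ for all $n$, so the intersection is in general strictly smaller than $[R,F]$ --- but this detour is harmless because the direction is trivial anyway.

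The substantive point is the one you yourself single out as delicate: the $G_n$-equivariance of the splitting $\overline{R}_n=M_{\mathrm{tors}}\oplus M_{\mathrm{free}}$. Your justification (``$M_{\mathrm{free}}$ is projective over $\mathbb{Z}_p$, so the extension splits'') only produces a splitting of abelian groups, not of $\mathbb{Z}_p[G_n]$-modules, since $M_{\mathrm{free}}$ need not be projective over $\mathbb{Z}_p[G_n]$. This matters: without an equivariant splitting, torsion in $\overline{R}_n$ need not survive to the coinvariants. For instance, take $G_n=\mathbb{Z}/p$ acting on $N=\mathbb{Z}/p\oplus\mathbb{Z}_p$ by $\sigma(a,b)=(a+\bar{b},b)$; then $N$ has torsion but $N_{G_n}\cong\mathbb{Z}_p$ is torsion-free, precisely because the extension $0\to\mathbb{Z}/p\to N\to\mathbb{Z}_p\to 0$ does not split as a module extension. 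So your argument for (ii)$\Rightarrow$(i) closes only once one shows that such a module cannot occur as $R/[R,R\mathcal{M}_n]$, or otherwise establishes the equivariant splitting. To be fair, the paper's own proofs of the one-relator and CA cases assert exactly the same decomposition into $\mathbb{Z}_p[G_n]$-submodules without justification, so you are reproducing the paper's argument together with its unproved step; but since you explicitly flag this as the crux and then offer a justification that does not work, the gap should be recorded as genuine.
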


\begin{proof}
See the previous proofs.
\end{proof}

\begin{proposition}

(i) Let \eqref{1} is an aspherical (pro-$p$-)presentation \cite{Mel}, then \eqref{1} and all its subpresentations are quasirational.

(ii) Let \eqref{1} be a quisirational presentation of a pro-$p$-group $G$, then all presentations of $G$ are quasirational
\end{proposition}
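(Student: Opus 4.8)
The plan is to exploit Proposition (equivalence of quasirationality with torsion-freeness of $R/[R,F]$) so that both parts reduce to tracking the behaviour of $R/[R,F]$ under the relevant operations.

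For part (i), let $1\to R\to F\to G\to 1$ be aspherical in the sense of \cite{Mel}, and let $S\subseteq R$ be a subpresentation, i.e.\ $S$ is the (closed) normal closure of a subset of a basis of relators and $R=S\cdot T$ in an appropriate sense. The first step is to recall that for an aspherical presentation the relation module $\overline{R}$ is permutational (this is the content of \cite[Theorem~2.7]{Mel} in the finite case and \cite[2]{Mel} in general), so by Example~1 the ambient presentation is quasirational. For the subpresentation $S$, I would run exactly the argument already used in the proof of Proposition~5 (the CA case): the coinvariants $S/[S,F]$ admit a homomorphism of abelian groups into $\overline{R}_{F}=R/[R,F]$, which is free because $\overline{R}$ is permutational; since $S/[S,F]$ is generated as an abelian group by the finitely many relators of $S$ and maps into a free abelian group with image of the expected rank, it is itself free abelian, hence torsion-free. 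Then if some $S/[S,S\mathcal{M}_n]$ had torsion, the torsion part $M_{\mathrm{tors}}$ would split off as a $G_n$-summand, and after mod-$p$ reduction the Lemma forces $(M_{\mathrm{tors}}/pM_{\mathrm{tors}})_{G}\neq 0$, contributing a nonzero torsion summand to $S/[S,F]$ — a contradiction. By Proposition~(i)$\Leftrightarrow$(ii) this means $S$ is quasirational, and the same works for the ambient $R$.

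For part (ii), let $\langle X;R\rangle$ be a quasirational pro-$p$-presentation of $G$ and let $\langle Y;Q\rangle$ be any other finite-type pro-$p$-presentation, with kernels $R\le F$ and $Q\le E$. The key fact for pro-$p$-groups is that any two presentations are stably equivalent: one can pass between them by Tietze-type moves, adding or removing a generator together with a relator expressing it (the ``Nielsen'' or ``$Q^{**}$''-type moves), so that it suffices to check that the torsion-freeness of $R/[R,F]$ is preserved under a single such move. Concretely, if $E=F*\widehat{\mathbb{Z}_p}\langle t\rangle$ and $Q$ is the normal closure of $R$ together with a relator $t w^{-1}$ with $w\in F$, then $E/Q\cong F/R=G$ and there is an isomorphism $Q/[Q,E]\cong R/[R,F]\oplus \mathbb{Z}_p$ (the new free summand coming from the new generator/relator pair); since a direct sum with $\mathbb{Z}_p$ is torsion-free iff the original summand is, torsion-freeness transfers. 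Invoking Proposition~(i)$\Leftrightarrow$(ii) again, $\langle Y;Q\rangle$ is quasirational.

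The main obstacle I expect is in part (ii): making the ``any two pro-$p$-presentations differ by elementary moves'' step precise and controlling the effect of each move on $R/[R,F]$ at the level of topological abelian groups (closed commutator subgroups, completed tensor products, left-exactness of $\varprojlim$). One must be careful that the splitting $Q/[Q,E]\cong R/[R,F]\oplus\mathbb{Z}_p$ genuinely holds as pro-$p$ abelian groups and that no new torsion is introduced by completion; the cleanest route is probably to compute $Q/[Q,E]$ directly from the Magnus/Fox-derivative description of the augmentation ideal for the enlarged free pro-$p$-group, or alternatively to note that $\dim_{\mathbb{F}_p}H^2(G,\mathbb{F}_p)$ is a presentation invariant and re-run the counting argument from the proof of Proposition~4 in this slightly more general setting. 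The discrete analogue of (ii) is false in general (Tietze moves over $\mathbb{Z}$ need not preserve torsion-freeness of $R/[R,F]$), so the proof must genuinely use the pro-$p$ hypothesis, presumably through the fact that a free pro-$p$-group has a basis that can be chosen compatibly after such moves.
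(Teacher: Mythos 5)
The paper states this proposition with no proof at all, so there is nothing to compare your argument against line by line; what follows is an assessment on the merits. Your part (i) is a faithful adaptation of the paper's own treatment of discrete CA-presentations and their subpresentations: asphericity in Melnikov's sense gives a permutational (free) relation module, hence quasirationality of the ambient presentation by Example~1, and for a subpresentation $S$ you argue that $S/[S,F]$ is free by mapping it onto a direct summand of the free module $R/[R,F]$ of rank equal to the number of relators of $S$ and counting generators, then run the mod-$p$ reduction and the Lemma. This is consistent with the paper's method, but it inherits the same unproved step the paper relies on throughout: that the torsion submodule $M_{\mathrm{tors}}$ of $S/[S,S\mathcal{M}_n]$ splits off as a $\mathbb{Z}_p[G_n]$-\emph{direct summand} (only the $\mathbb{Z}_p$-module splitting is automatic). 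Without equivariance of that splitting, right-exactness of coinvariants only gives a map $(M_{\mathrm{tors}})_{G_n}\to (S/[S,S\mathcal{M}_n])_{G_n}=S/[S,F]$ which need not be injective, so $(M_{\mathrm{tors}}/pM_{\mathrm{tors}})_{G_n}\neq 0$ does not by itself produce torsion in $S/[S,F]$. Since the paper makes the identical assertion in its proof of the one-relator proposition, this is a shared defect rather than one you introduced.

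Part (ii) is where your proposal has a genuine gap of its own. The entire argument rests on the claim that any two finite-type pro-$p$-presentations of $G$ are connected by elementary Tietze moves, and you neither prove nor precisely cite this; what is actually needed is the Gasch\"utz lemma for free pro-$p$-groups (any two epimorphisms from a free pro-$p$-group of rank $n\geq d(G)$ onto $G$ differ by an automorphism of the free group), after which presentations are classified by the rank of $F$ and one only has to analyse the addition of a redundant generator/relator pair. A cleaner route, which you only gesture at in your closing sentences, avoids Tietze moves entirely: the five-term exact sequence for \eqref{1} with $H_2(F,\mathbb{Z}_p)=0$ gives $0\to H_2(G,\mathbb{Z}_p)\to R/[R,F]\to \ker(H_1(F)\to H_1(G))\to 0$; the right-hand term is a closed subgroup of $\mathbb{Z}_p^n$, hence free, so the sequence splits and $R/[R,F]\cong H_2(G,\mathbb{Z}_p)\oplus\mathbb{Z}_p^m$. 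Torsion-freeness of $R/[R,F]$ is therefore a property of $G$ alone, and (ii) follows at once from the preceding equivalence proposition. Note that this computation works verbatim over $\mathbb{Z}$, so your parenthetical claim that the discrete analogue of (ii) is false is itself incorrect: granting the paper's equivalence of quasirationality with torsion-freeness of $R/[R,F]$, quasirationality is presentation-independent for discrete groups as well.
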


\begin{remark} Let $v$ is a generator of the $mod(p)$ relation
module $\overline{R}/p\overline{R}$ of a one-relator pro-$p$-group. Suppose $H$ is the stabiliser of $v$ by the action of $G$. Then there is a surjection $\eta: \mathbb{F}_p[[G/H]]\twoheadrightarrow
\overline{R}/p\overline{R}$ and we wants to check when $\eta$ is isomorphism. It's enough to establish
injectivity of the map $gr(\eta): gr_{\Delta}\mathbb{F}_p[[G/H]] \twoheadrightarrow gr_{\Delta}\overline{R}/p\overline{R}$. Then the theorem due to Alperin \cite[Corollary]{Sha} gives a criterion of injectivity.
\end{remark}

\section{The $p$-adic rationalization of quasirational relation modules.}

For every relation module $\overline{R}$,  similarly to \cite[8]{Bou}, we introduce the notions  of $p$-adic $G(p)$-completion $\overline{R}_c$ and $p$-adic rationalization:

\begin{definition}. Let $\overline{R}$ be a quasirational relation module, let $\overline{R}_c$ be its corresponding \textbf{$p$-adic $G(p)$-completion}. Set
$$ \overline{R}_c= \varprojlim {R/[R,R\mathcal{M}_n]} \otimes_{\mathbb{Z}} \mathbb{Z}_p=\varprojlim
(R/[R,R\mathcal{M}_n])^{\wedge},$$ where $(R/[R,R\mathcal{M}_n])^{\wedge}$ is just a pro-$p$-completion of a finitely generated free abelian group
($\overline{R}_c\cong \overline{R}$ in a case of pro-$p$-presentations). Define
\textbf{$p$-adic rationalization} as a pro-$fd$ (finite
dimensional)-module over pro-$p$-completion $G^{\wedge}$ of $G$ \cite[1.4]{Se3} which is by definition
$$\overline{R}_c\widehat{\otimes}_{\mathbb{Z}_p}\mathbb{Q}_p= \varprojlim (R/[R,R\mathcal{M}_n])^{\wedge} \otimes_{\mathbb{Z}_p}\mathbb{Q}_p.$$
\end{definition}

If \eqref{1} is some quasirational presentation, then we can look at topological closure $\widehat{R}$ of $R$ in the pro-$p$-topology of $F$, i.e.
\begin{eqnarray}
\widehat{R}\cong \varprojlim_{U_o \lhd F}R/{R\cap U}, \mid F:U\mid=p^s\label{2}.
\end{eqnarray}
In general $R$ could have many more ``open" subgroups than those which arise as intersections $R\cap U$ where $U_o\unlhd F$, hence the behavior of $\overline{\widehat{R}}$ in general differs from that of $\overline{R_c}$.

Let's explore carefully quasirational pro-$p$-presentations of finite type. First note that the group of $\mathbb{Q}_p$-points of any affine group scheme $G$ has the $p$-adic topology. Indeed, \cite[Chap.2]{DM} shows that $G$ can be expressed as a filtered inverse limit
$$G=\varprojlim G_{\alpha}$$
of linear algebraic groups. Each $G_\alpha(\mathbb{Q}_p)$ has a canonical $p$-adic topology induced by the embeding $G_{\alpha}\hookrightarrow GL_n$. Define the topology on $G(\mathbb{Q}_p)$ by
$$G(\mathbb{Q}_p)=\varprojlim G_{\alpha}(\mathbb{Q}_p).$$

\begin{definition}
Fix a pro-p-group $G$. Define $p$-adic Malcev completion of $G$ by a universal diagram where $\rho$ is continuous Zarissky-dense homomorphism of $G$ into $\mathbb{Q}_p$-points of a prounipotent affine group $G_u^{\wedge}(\mathbb{Q}_p)$
$$\xymatrix@R=0.5cm{
                &         G^{\wedge}_u(\mathbb{Q}_p)  \ar[dd]^{\tau}     \\
 G \ar[ur]^{\rho} \ar[dr]_{\chi}                 \\
                &         H(\mathbb{Q}_p)              }$$
We require that for every continuous Zarissky-dense homomorphism $\chi$ into $\mathbb{Q}_p$-points of a prounipotent affine group $H$ there is a unique homomorphism $\tau$ of prounipotent groups, making the diagram commutative.
\end{definition}

Such an object always exists \cite[Prop.2.1]{Ha1};\cite{Knu, HM}.

Take a presentation \eqref{1}. Since $F$ is a free pro-$p$-group, it embeds into $p$-adic Malcev completion $F_u$ and we can form $R_u \lhd F_u$, the Zarissky closure of $R$ in $F_u$. The universal property of $R_u^{\wedge}$ defines a homomorphism $\tau$
$$\xymatrix@R=0.5cm{
                &         R_u^{\wedge} \ar@{->>}[dd]^{\tau}     \\
  R \ar[ur] \ar[dr]_{i}                 \\
                &         R_u                 }$$
which must be onto since $im(\tau)\supseteq R$ ($R$ is dense in $R_u$). Passing to abelianizations we obtain a homomorphism of abelian prounipotent groups $\overline{\tau}: \overline{R_u^{\wedge}} \to \overline{R_u}$

In a forthcoming paper we include $\overline{\tau}$ into a sequence of prounipotent abelian groups in a way similar to \cite[Prop.7, Cor.]{BH},

\begin{equation}
\xymatrix{
0 \ar[rr] & &\varpi_2  \ar[rr] & & \overline{C_u} \ar[rr] \ar[dr]_{\eta}
                &  &    \overline{R_u} \ar[rr] & & 0  \\
            & & & &   &  \overline{R_u^{\wedge}}  \ar[ur]_{\overline{\tau}}      } \label{3}
\end{equation}
where $\eta$ could be studied, completing fibrations \cite[3.4]{Pri}. The diagram \eqref{3} implies the sequence of $\mathbb{Q}_p$-points
$$\xymatrix{
 \overline{C_u}(\mathbb{Q}_p) \ar@{->>}[rr] \ar@{->>}[dr]_{\eta(\mathbb{Q}_p)}
                &  &    \overline{R_u}(\mathbb{Q}_p)  \\
              &  \overline{R_u^{\wedge}}(\mathbb{Q}_p)  \ar@{->>}[ur]_{\overline{\tau}(\mathbb{Q}_p)}      } $$

But the starting point and motivation is here:

\begin{theorem} Let \eqref{1} be a quasirational pro-$p$-presentation, then there is an isomorphism of topological $\mathcal{O}(F_u)^{\ast}-$ modules of finite type \cite[3]{Ha2} (we use a slightly different than \cite[2.5]{Ha2} topology on tensor products to obtain continuous actions of $\mathcal{O}(F_u)^{\ast}$)

$$\psi:{\overline{R}\widehat{\otimes}_{\mathbb{Z}_p}\mathbb{Q}_p \cong \overline{R_u^{\wedge}}(\mathbb{Q}_p)}$$
\end {theorem}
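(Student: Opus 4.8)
The plan is to realize both sides as the inverse limit, over $n$, of one and the same tower of finite‑dimensional $\mathbb{Q}_p$‑vector spaces, and to single out $\psi$ by a universal property; the work is then entirely in matching the Zassenhaus tower with the tower that defines $\overline{R_u^{\wedge}}$. First I would unwind the left‑hand side. By the equivalence quasirationality $\Leftrightarrow$ torsion‑freeness of $R/[R,F]$ proved above, and by the same reduction $\bmod\,p$ together with the elementary Lemma, each $R/[R,R\mathcal{M}_n]$ is a \emph{finitely generated free} $\mathbb{Z}_p$‑module: it is pro‑$p$ and its reduction mod $p$ is the finite $\bmod\,p$ relation module of the finite $p$‑group $G_n:=F/R\mathcal{M}_n$. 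Hence $V_n:=(R/[R,R\mathcal{M}_n])\otimes_{\mathbb{Z}_p}\mathbb{Q}_p$ is finite‑dimensional, the maps $V_{n+1}\twoheadrightarrow V_n$ coming from $\mathcal{M}_{n+1}\subseteq\mathcal{M}_n$ are onto, and $\overline{R}\widehat{\otimes}_{\mathbb{Z}_p}\mathbb{Q}_p=\varprojlim_n V_n$ is a pro‑$fd$ $\mathbb{Q}_p$‑module into which, as already noted, $\overline{R}$ embeds (left‑exactness of $\varprojlim$ and $\bigcap_n[R,R\mathcal{M}_n]=[R,R]$). The $\mathcal{O}(F_u)^{\ast}$‑module structure comes from the conjugation action of $F$ on $R$: each $V_n$ is a $\mathbb{Q}_p[G_n]$‑module, and because $\bar g-1$ is topologically nilpotent in $\mathbb{Z}_p[G_n]$ for every $p$‑element $\bar g$ (indeed $(\bar g-1)^{p^t}\in p\,\mathbb{Z}_p[G_n]$ when $\bar g^{p^t}=1$), the $F$‑action extends continuously to $\mathcal{O}(F_u)^{\ast}$ once one puts on the completed tensor products the topology indicated in the statement.

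Next I would analyze the right‑hand side. Functoriality of the $p$‑adic Malcev completion applied to $R\hookrightarrow R_u^{\wedge}$, followed by abelianization, gives a continuous, Zariski‑dense, $F$‑equivariant homomorphism $\overline{R}=R^{ab}\to\overline{R_u^{\wedge}}$; by the universal property restricted to abelian — hence vector‑group — targets, $\overline{R_u^{\wedge}}$ \emph{is} the $p$‑adic Malcev completion of the pro‑$p$ abelian group $\overline{R}$, carrying the $\mathcal{O}(F_u)^{\ast}$‑module structure of $H_1$ of a Malcev completion in the sense of \cite[3]{Ha2}. I would then identify the tower of unipotent quotients computing this completion with $\{V_n\}$: the image of $[R,R\mathcal{M}_n]$ in $\overline{R}$ equals $\mathfrak{a}_n\overline{R}$, where $\mathfrak{a}_n\subseteq\mathbb{Z}_p[[G]]$ is the closed ideal generated by $\{g-1:g\in\mathcal{M}_n(G)\}$ and $\mathcal{M}_n(G)$ is the Zassenhaus filtration of $G$ (the image of that of $F$); thus $\overline{R}/\mathfrak{a}_n\overline{R}=R/[R,R\mathcal{M}_n]$ is $\mathbb{Z}_p$‑free of finite rank with $\bigcap_n\mathfrak{a}_n\overline{R}=0$. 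Working inside the category of topological $\mathcal{O}(F_u)^{\ast}$‑modules of finite type, this tower is cofinal among the unipotent quotients that appear in $\overline{R_u^{\wedge}}$, so that $\overline{R_u^{\wedge}}(\mathbb{Q}_p)=\varprojlim_n V_n$ as topological $\mathcal{O}(F_u)^{\ast}$‑modules.

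The comparison map is then forced: the $\mathbb{Z}_p$‑linear, $F$‑equivariant map $\overline{R}\to\overline{R_u^{\wedge}}(\mathbb{Q}_p)$ takes values in a $\mathbb{Q}_p$‑vector space that is a continuous $\mathcal{O}(F_u)^{\ast}$‑module, hence factors uniquely and continuously through the $p$‑adic rationalization $\overline{R}\widehat{\otimes}_{\mathbb{Z}_p}\mathbb{Q}_p=\varprojlim_n V_n$, giving $\psi$; under the two identifications it is the identity at each level. Surjectivity is immediate from Zariski‑density of $R$ in $R_u^{\wedge}$ — dense image in each finite‑dimensional $V_n$ forces $\psi$ onto it — and injectivity follows from the embedding $\overline{R}\hookrightarrow\overline{R}\widehat{\otimes}_{\mathbb{Z}_p}\mathbb{Q}_p$ together with the cofinality just used. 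That $\psi$ is a homeomorphism of topological modules is then automatic, the level maps being isomorphisms of finite‑dimensional $\mathbb{Q}_p$‑spaces.

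The crux, and the step I expect to be the main obstacle, is the cofinality in the middle paragraph: proving that \emph{within the category of topological $\mathcal{O}(F_u)^{\ast}$‑modules of finite type} the Zassenhaus tower $\{[R,R\mathcal{M}_n]\}$ of $R$ already computes the $p$‑adic Malcev completion of $\overline{R}$, with no vector‑group quotient lost or gained. Concretely this amounts to comparing the $\mathbb{F}_p$‑augmentation filtration on the $G$‑module $\overline{R}$ — which enters through $\mathcal{M}_n(G)$ and $\mathfrak{a}_n$ — with the $\mathbb{Q}_p$‑augmentation filtration that governs the Malcev completion, and it is exactly the absence of $p$‑torsion, i.e. quasirationality, that makes this comparison survive both $-\otimes_{\mathbb{Z}_p}\mathbb{Q}_p$ and the passage to $\varprojlim$; the good behaviour of relation modules under coinvariant (scalar) completions noted in the Introduction is what one leans on. The subsidiary technical point, already flagged in the statement, is the choice of topology on $\overline{R}\widehat{\otimes}_{\mathbb{Z}_p}\mathbb{Q}_p$ for which the $\mathcal{O}(F_u)^{\ast}$‑action is continuous — I would take the inverse‑limit topology arising from the $p$‑adic topologies on the $V_n$ — together with checking that $\psi$ intertwines the $\mathcal{O}(F_u)^{\ast}$‑actions computed on the two sides.
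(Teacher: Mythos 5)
Your overall strategy coincides with the paper's: both sides are to be identified by showing that the Zassenhaus tower $\{(R/[R,R\mathcal{M}_n])\otimes\mathbb{Q}_p\}$ already exhausts the abelian unipotent quotients of the $p$-adic Malcev completion of $R$, i.e.\ by verifying the universal property that characterizes $\overline{R_u^{\wedge}}(\mathbb{Q}_p)$. However, the one step you explicitly leave open --- the ``cofinality'' of that tower among abelian unipotent quotients --- is precisely the content of the theorem, so as written your proposal reduces the statement to an unproved claim rather than proving it; moreover the route you suggest for closing it (comparing the $\mathbb{F}_p$-augmentation filtration governing $\mathcal{M}_n$ with a $\mathbb{Q}_p$-augmentation filtration governing the Malcev completion) is not what the paper does and is harder than necessary.

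The paper closes this gap with a short, essentially topological argument that you should supply. Reduce, as you do, to a single continuous Zariski-dense homomorphism $\phi\colon R\to U\cong\mathbb{Q}_p^{\,l}$. Since $R$ is compact, $\phi(R)$ is a compact, hence closed and bounded, Zariski-dense subgroup of $\mathbb{Q}_p^{\,l}$; because $\mathbb{Z}_p$ is a PID this forces $\phi(R)\cong\mathbb{Z}_p^{\,l}$, a full lattice. For each $n$ the kernel $W$ of $\gamma_n\circ\phi\colon R\to\mathbb{Z}_p^{\,l}/p^n\mathbb{Z}_p^{\,l}$ is open, so by the description \eqref{2} of the induced topology on $R$ there is a $k$ with $R\cap\mathcal{M}_k\subseteq W$; since the target is abelian and $[R,\mathcal{M}_k]\subseteq R\cap\mathcal{M}_k$, this composite kills $[R,R\mathcal{M}_k]$ and factors through $R/[R,R\mathcal{M}_k]$. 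Passing to the limit over $n$ factors $\phi$ itself through $\overline{R}_c=\varprojlim_k R/[R,R\mathcal{M}_k]$, and quasirationality (each $R/[R,R\mathcal{M}_k]$ torsion-free, hence $\mathbb{Z}_p$-free of finite rank) is exactly what lets this factorization survive $-\widehat{\otimes}_{\mathbb{Z}_p}\mathbb{Q}_p$, producing the required map to $\mathbb{Q}_p^{\,l}$. No filtration comparison is needed. The remaining parts of your outline --- surjectivity from Zariski density, the identification of $\overline{R_u^{\wedge}}$ as representing dense continuous homomorphisms of $R$ into abelian prounipotent groups, and the $\mathcal{O}(F_u)^{\ast}$-equivariance of $\psi$ (which the paper itself dismisses as routine checking, deferring details elsewhere) --- match the paper's proof.
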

\begin{proof}(just a basic construction of $\psi$)
 We prove that $\overline{R}_c \widehat{\otimes}_{\mathbb{Z}_p} \mathbb{Q}_p$ has the universal property of $p$-adic Malcev completion with respect to dense continuous homomorphisms of pro-p-group $R$ into abelian pro-unipotent groups. Since every pro-unipotent group is an inverse limit of some surjective inverse system of unipotent groups we need to prove the universal property just with respect to homomorphisms into abelian unipotent groups.
Let $\phi : R\rightarrow U\cong \mathbb{Q}_p^l$ be any continuous Zarissky-dense homomorphism. Since $\phi (R)$ is dense in $\mathbb{Q}_p^l$ and $\mathbb{Z}_p$ is PID then $\phi(R)\cong \mathbb{Z}_p^l$ (compare \cite[Lemma7.5]{HM}). But then quasirationality implies the existence of required $\psi : \overline{R}\widehat{\otimes}_{\mathbb{Z}_p}\mathbb{Q}_p\rightarrow \mathbb{Q}_p^l$. More exactly, let $\gamma_n:Z_p^l\twoheadrightarrow \mathbb{Z}_p^l/p^n\mathbb{Z}_p^l, W=ker(\gamma_n\circ\phi).$ By construction \eqref{2} of the topology on $R$ we can find $k\in \mathbb{N}$
such that $R \cap \mathcal{M}_k\subseteq W$ but $[R,\mathcal{M}_k]\subseteq R\cap \mathcal{M}_k.$ Take the canonical epimorphism $\xi_k: \overline{R}\twoheadrightarrow R/[R,R\mathcal{M}_k]$. Since $R/[R,R\mathcal{M}_k]$ has no torsion we know that $\exists \eta_k:R/[R,R\mathcal{M}_k]\twoheadrightarrow \mathbb{Z}_p^l$ so define $\psi=(\eta_k\circ\xi_k)\otimes id$.

Routine checking shows that $\psi$ is a $\mathcal{O}(F_u)^{\ast}-$ module homomorphism, $\mathcal{O}(F_u)^{\ast}-$ module structures on the left and on the right are defined from the actions of $F$ and $F_u$ by conjugation (details in \cite{Mikh}).
\end{proof}




\end{document}